\spnewtheorem{assumption}{Assumption}{\bf}{\rm}
\newcommand{\bR}{\mathbb{R}}
\newcommand{\cF}{\mathcal{F}}
\newcommand{\cH}{\mathcal{H}}
\newcommand{\rmd}{\mathrm{d}}
\newcommand{\bfb}{\mathbf{b}}
\newcommand{\bfc}{\mathbf{c}}
\newcommand{\bfe}{\mathbf{e}}
\newcommand{\bfu}{\mathbf{u}}
\newcommand{\bfv}{\mathbf{v}}
\newcommand{\bfy}{\mathbf{y}}
\newcommand{\bfA}{\mathbf{A}}
\newcommand{\bfF}{\mathbf{F}}
\newcommand{\bfL}{\mathbf{L}}
\newcommand{\Rn}{\mathbb{R}^{n}}
\newcommand{\Rnn}{\mathbb{R}^{n \times n}}
\newcommand{\zz}{^{\top}}
\newcommand{\uast}{^{\ast}}
\newcommand{\dkh}[1]{\left(#1\right)}
\newcommand{\hkh}[1]{\left\{#1\right\}}
\newcommand{\jkh}[1]{\left\langle#1\right\rangle}
\newcommand{\norm}[1]{\left\|#1\right\|}
\newcommand{\abs}[1]{\left\lvert #1\right\rvert}
\begin{document}

\title{A New Complexity Result for Strongly Convex Optimization with Locally $\alpha$-H{\"o}lder Continuous Gradients\thanks{We would like to acknowledge support for this project from RGC grant JLFS/P-501/24 for the CAS AMSS-PolyU Joint Laboratory in Applied Mathematics and Hong Kong Research Grant Council project PolyU15300024.}
}


\titlerunning{Complexity with H{\"o}lder Continuous Gradients}        

\author{Xiaojun Chen \and C. T. Kelley \and Lei Wang
	}

\authorrunning{X. Chen,  C. T. Kelley, and L. Wang} 

\institute{Xiaojun Chen \at
	Department of Applied Mathematics, The Hong Kong Polytechnic University, Hong Kong, China \\
	\email{maxjchen@polyu.edu.hk}           
	\and
	C. T. Kelley \at
	Department of Mathematics, Box 8205, North Carolina State University, Raleigh, NC 27695-8205, USA \\
	\email{Tim\_Kelley@ncsu.edu}
	\and
	Lei Wang \at
	Department of Applied Mathematics, The Hong Kong Polytechnic University, Hong Kong, China \\
	\email{wlkings@lsec.cc.ac.cn}
}

\date{Received: date / Accepted: date}

\maketitle

\begin{abstract}
	In this paper, we present a new complexity result for the gradient descent method with an appropriately fixed stepsize for minimizing a strongly convex function with locally $\alpha$-H{\"o}lder continuous gradients ($0 < \alpha \leq 1$). The complexity bound for finding an approximate minimizer with a distance to the true minimizer less than $\varepsilon$ is $O(\log (\varepsilon^{-1}) \varepsilon^{2 \alpha - 2})$, which extends the well-known complexity result for $\alpha = 1$.
	\keywords{Gradient descent \and Gradient flow \and H{\"o}lder continuity}
	\subclass{90C25 \and 65L05 \and 65Y20}
\end{abstract}

\section{Introduction}
\label{sec:introduction}

Let $f: \Rn \to \bR$ be a $\mu$-strongly convex and continuously differentiable function with a parameter $\mu > 0$, that is,
\begin{equation*} 
	f (\bfu) - f (\bfv) \geq \nabla f (\bfv)\zz (\bfu - \bfv) + \frac{\mu}{2} \norm{\bfu - \bfv}^2,
\end{equation*}
for all $\bfu, \bfv \in \Rn$.
Here, $\norm{\,\cdot\,}$ is the $\ell_2$ norm on $\Rn$.
In this paper, we focus on establishing a new complexity result for the classic gradient descent method
\begin{equation} \label{eq:gd}
	\bfu_{k + 1} = \bfu_k - \tau \nabla f (\bfu_k),
\end{equation}
where $\tau > 0$ is a fixed stepsize, for solving the following unconstrained optimization problem
\begin{equation} \label{opt:main}
	\min_{\bfu \in \Rn} f (\bfu).
\end{equation}

Throughout this paper, we assume that the gradient $\nabla f$ of $f$ satisfies the locally $\alpha$-H{\"o}lder continuous condition.

\begin{definition} \label{def:Holder}
	We say $g: \Rn \to \Rn$ is \emph{locally $\alpha$-H{\"o}lder continuous} with $\alpha \in (0,1]$ if there exist $\beta > 0$ and $\delta > 0$ such that
	\begin{equation} \label{eq:Holder}
		\norm{g (\bfu) - g (\bfv)}
		\leq \beta \norm{\bfu - \bfv}^\alpha,
	\end{equation}
	for all $\bfu, \bfv \in \Rn$ satisfying $\norm{\bfu - \bfv} \leq \delta$.
	We denote this class of functions by $\cH (\alpha, \beta, \delta)$.
    We say $g: \Rn \to \Rn$ is \emph{$\alpha$-H{\"o}lder continuous} if condition \eqref{eq:Holder} holds for all $\bfu, \bfv \in \Rn$.
\end{definition}

Suppose that the gradient $\nabla f$ is globally Lipschitz continuous, which corresponds to condition \eqref{eq:Holder} with $\alpha = 1$ for all $\bfu, \bfv \in \Rn$.
Then it is well known that the gradient descent method \eqref{eq:gd} with any initial point $\bfu_0 \in \Rn$ and  stepsize $\tau=2/(\mu\beta)$ achieves a linear rate of convergence \cite[Theorem 2.1.15]{Nesterov2018lectures} as follows,
\begin{equation*}
    \norm{\bfu_k - \bfu\uast} \leq \dkh{\dfrac{\beta - \mu }{\beta + \mu}}^k \norm{\bfu_0 - \bfu\uast},
\end{equation*}
where $\bfu\uast$ is the minimizer of problem \eqref{opt:main}.
Therefore, for a given $\varepsilon > 0$, the gradient descent method \eqref{eq:gd} with stepsize $\tau=2/(\mu\beta)$ is guaranteed to find a point $\bfu_k$ satisfying $\norm{\bfu_k - \bfu\uast} \leq \varepsilon$ after at most $O (\log (\varepsilon^{-1}))$ iterations.
This analysis fails in the non-Lipschitz case.
As our theoretical results unveil, the gradient descent method \eqref{eq:gd} with a fixed stepsize may stagnate before converging to the minimizer.

The minimization of convex functions with $\alpha$-H{\"o}lder continuous gradients is first studied by Devolder et al. \cite{Devolder2014first} in the form of inexact oracles.
Following this work, Lan \cite{Lan2015bundle} develops an accelerated prox-level method for solving the class of composite problems.
Moreover, Nesterov \cite{Nesterov2015universal} proposes universal gradient methods for minimizing composite functions which have $\alpha$-H{\"o}lder continuous gradients of its smooth part.
However, to the best of our knowledge, the complexity bound of gradient descent method \eqref{eq:gd} for a function $f$ satisfying the locally $\alpha$-H{\"o}lder continuous condition with $0 < \alpha < 1$ has not been developed.

The contribution of this paper is to provide a new complexity result for the gradient descent method \eqref{eq:gd} with a fixed stepsize $\tau$ to solve strongly convex optimization problems on $\Rn$ when the gradient is locally $\alpha$-H{\"o}lder continuous with $0 < \alpha < 1$, but not Lipschitz continuous. We show that method \eqref{eq:gd} with an appropriately chosen $\tau$ can  find a point $\bfu_k$ satisfying $\norm{\bfu_k - \bfu\uast} \leq \varepsilon$ after at most $O (\log (\varepsilon^{-1})\varepsilon^{2\alpha-2})$ iterations.

We end this section by illustrating Definition \ref{def:Holder} with a simple example.

\begin{example}
	Consider the following univariate function,
	\begin{equation*}
		f (u) = \frac{\lambda}{2} u^2 + \frac{2}{3} u_+^{3/2},
	\end{equation*}
	where $\lambda > 0$, $u \in \bR$, and $u_+ = \max\{u, 0\}$.
	Then we have
	\begin{equation*}
		f^\prime (u) = \lambda u + \sqrt{u_+}.
	\end{equation*}
	It can be readily verified that $f^\prime$ cannot be expected to satisfy the condition \eqref{eq:Holder} globally for all $u, v \in \bR$.
	However, if $\norm{u - v} \leq \delta = 1$, it holds that
	\begin{equation*}
		\norm{u - v} \leq \norm{u - v}^{1/2}.
	\end{equation*}
	Moreover, for any $u, v \in \bR$, we have
	\begin{equation*}
		\norm{\sqrt{u_+} - \sqrt{v_+}}
		\leq \norm{u_+ - v_+}^{1/2}
		\leq \norm{u - v}^{1/2}.
	\end{equation*}
	Therefore, $f^\prime (u)$ satisfies the H{\"o}lder continuous condition \eqref{eq:Holder} with $\beta = 1 + \lambda$ and $\alpha = 1/2$ for $\norm{u - v} \leq \delta=1$.
\end{example}

\section{Convergence of Gradient Flow Equations}


We interpret the gradient descent iteration \eqref{eq:gd} as a forward Euler discretization of the following gradient flow equation,
\begin{equation} \label{eq:gflow}
\left\{
\begin{aligned}
	& \frac{\rmd \bfu}{\rmd t} = - \nabla f (\bfu), \\
	& \bfu (0) = \bfu_0,
\end{aligned}
\right.
\end{equation}
with time step $\tau > 0$.



The purpose of this section is to establish the existence and convergence of the unique solution to the initial value problem \eqref{eq:gflow}.
We state these results as Lemma \ref{le:solution} below.

\begin{lemma} \label{le:solution}
	Suppose that $f: \Rn \to \bR$ is $\mu$-strongly convex and continuously differentiable.
	Let $\bfu\uast \in \Rn$ be the unique minimizer of $f$.
	Then the initial value problem \eqref{eq:gflow} has a unique solution $\bfu (t)$ defined for all $t \in [0, \infty)$.
	Moreover, the trajectory $\{\bfu (t)\}_{t > 0}$ is bounded and
	\begin{equation*}
		\norm{\bfu (t) - \bfu\uast}
		\leq e^{- \mu t} \norm{\bfu_0 - \bfu\uast}.
	\end{equation*}
\end{lemma}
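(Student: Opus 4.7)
The plan is to exploit the strong monotonicity of $\nabla f$, which is equivalent to $\mu$-strong convexity and reads
\[
	\jkh{\nabla f (\bfu) - \nabla f (\bfv), \bfu - \bfv} \geq \mu \norm{\bfu - \bfv}^2
	\qquad \forall\, \bfu, \bfv \in \Rn.
\]
Since $\nabla f$ is only assumed continuous (not Lipschitz), I cannot invoke Picard--Lindel\"of directly; instead, strong monotonicity replaces Lipschitz continuity and simultaneously delivers uniqueness, the a priori decay estimate, and global extendibility.

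First, I would use Peano's existence theorem, together with the continuity of $\nabla f$, to obtain a local $C^1$ solution $\bfu : [0, T) \to \Rn$ of \eqref{eq:gflow} for some $T > 0$. Next, for any such local solution I would define $\phi (t) = \norm{\bfu (t) - \bfu\uast}^2$ and differentiate, using $\nabla f (\bfu\uast) = 0$ and strong monotonicity:
\[
	\phi^\prime (t)
	= 2 \jkh{\bfu^\prime (t), \bfu (t) - \bfu\uast}
	= - 2 \jkh{\nabla f (\bfu (t)) - \nabla f (\bfu\uast), \bfu (t) - \bfu\uast}
	\leq - 2 \mu \phi (t).
\]
Gr\"onwall's inequality then yields $\phi (t) \leq e^{-2 \mu t} \phi (0)$, giving the claimed contraction bound and, in particular, showing that the trajectory stays in the closed ball of radius $\norm{\bfu_0 - \bfu\uast}$ about $\bfu\uast$.

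Because every local solution is a priori bounded on its interval of existence, a standard continuation argument (any maximal solution with finite terminal time would have a bounded limit at $T$, which can be taken as a new initial condition to extend the solution beyond $T$) shows that the solution can be extended to all of $[0, \infty)$. For uniqueness, I would apply the same computation to two solutions $\bfu_1, \bfu_2$ with the same initial datum: setting $\psi (t) = \norm{\bfu_1 (t) - \bfu_2 (t)}^2$, the strong monotonicity of $\nabla f$ gives $\psi^\prime (t) \leq - 2 \mu \psi (t)$ with $\psi (0) = 0$, hence $\psi \equiv 0$.

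The main obstacle is the absence of any Lipschitz or even H\"older hypothesis on $\nabla f$ in this lemma, so one must be careful not to appeal to the standard Cauchy--Lipschitz theorem. The resolution is precisely that the $\mu$-strong monotonicity inherited from strong convexity is strong enough to bypass Lipschitz continuity: it simultaneously drives the one-sided Gr\"onwall estimate that yields uniqueness and the exponential contraction toward $\bfu\uast$, while the contraction itself prevents finite-time blow-up and thus secures global existence.
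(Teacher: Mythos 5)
Your proposal is correct and follows essentially the same route as the paper: local existence from continuity of $\nabla f$ (Peano, rather than Picard--Lindel\"of), uniqueness and the exponential decay both from the strong monotonicity of $\nabla f$ combined with a Gr\"onwall-type argument on $\norm{\bfu(t)-\bfv(t)}^2$ and $\norm{\bfu(t)-\bfu\uast}^2$. The only difference is that you spell out the continuation argument for global existence explicitly, whereas the paper delegates existence to a cited theorem; this is a matter of detail, not of method.
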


\begin{proof}
	The results in \cite[Theorem 2.19]{Teschl2012ordinary} indicate that a solution $\bfu (t)$ to problem \eqref{eq:gflow} exists.
	To prove the uniqueness, we assume that both $\bfu (t)$ and $\bfv (t)$ are solutions of problem \eqref{eq:gflow}.
	Then it follows from the strong convexity of $f$ that
	\begin{equation*}
		\frac{\rmd}{\rmd t} \norm{\bfu (t) - \bfv (t)}^2
		= -2 \dkh{\nabla f (\bfu(t)) - \nabla f (\bfv(t))}\zz \dkh{\bfu (t) - \bfv (t)}
		\leq 0,
	\end{equation*}
	which indicates that $\phi (t) = \norm{\bfu (t) - \bfv (t)}^2$ is decreasing in $t$.
	Since $\phi (0) = 0$, we know that $\phi (t) = 0$ for all $t \geq 0$.
	Hence, $\bfu (t) = \bfv (t)$.
	
	Similarly, according to the strong convexity of $f$, we have
	\begin{equation*}
	\begin{aligned}
		\frac{\rmd}{\rmd t} \norm{\bfu (t) - \bfu\uast}^2
		= {} & - 2 \nabla f (\bfu (t))\zz \dkh{\bfu (t) - \bfu\uast} \\
		= {} & - 2 \dkh{\nabla f (\bfu (t)) - \nabla f (\bfu\uast)}\zz \dkh{\bfu (t) - \bfu\uast} \\
		\leq {} & - 2 \mu \norm{\bfu (t) - \bfu\uast}^2.
	\end{aligned}
	\end{equation*}
	The Gr{\"o}nwall inequality \cite{Gronwall1919note} then implies that
	\begin{equation*}
		\norm{\bfu (t) - \bfu\uast}^2
		\leq e^{- 2 \mu t} \norm{\bfu_0 - \bfu\uast}^2,
	\end{equation*}
	as desired.
	\qed
\end{proof}

%
%
%
%
%
%
%


\section{Discretization Error}


This section delves into the error analysis for the forward Euler discretization of the gradient flow equation \eqref{eq:gflow}.
We first formalize the assumptions  on the function $f$ throughout the analysis.

\begin{assumption} \label{asp:function}
	The following statements hold.
	\begin{enumerate}
		
		\item $f$ is $\mu$-strongly convex with $\mu > 0$ and continuously differentiable.
		
		\item $\nabla f \in \cH (\alpha, \beta, \delta)$ for some $\beta, \delta > 0$ and $\alpha \in (0, 1]$.
		
	\end{enumerate}
\end{assumption}

According to Lemma \ref{le:solution}, we know that the initial value problem \eqref{eq:gflow} has a unique solution $\bfu (t)$.
Lemma \ref{le:Holder} summarizes some useful properties of $\bfu (t)$.

\begin{lemma} \label{le:Holder}
	Let Assumption~\ref{asp:function} hold.
	\begin{enumerate}
		
		
		\item There exists a constant $M > 0$ such that $\norm{\nabla f (\bfu (t))} \leq M$ for all $t \geq 0$.
		
		\item $\bfu$ is Lipschitz continuous with the Lipschitz constant $M$.
		
		\item There exists a constant $\bar{\beta} = \beta M^\alpha$ such that, for all $\tau \in (0, \delta / M)$, we have
		\begin{equation*}
			\norm{\bfu (t) - \tau \nabla f (\bfu (t)) - \bfu (t + \tau)} \leq \bar{\beta} \tau^{1 + \alpha}.
		\end{equation*}
		
	\end{enumerate}
\end{lemma}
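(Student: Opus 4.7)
The plan is to prove the three parts in order, leaning on Lemma~\ref{le:solution} for the boundedness of the trajectory and on the gradient flow ODE for the rest.

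For part~(1), the key observation is that Lemma~\ref{le:solution} already confines the whole orbit to the closed ball $\{\bfv \in \Rn : \norm{\bfv - \bfu\uast} \leq \norm{\bfu_0 - \bfu\uast}\}$, which is compact. Since $f$ is continuously differentiable by Assumption~\ref{asp:function}, $\nabla f$ is continuous and therefore bounded on this compact set, so $M := \sup_{t \geq 0}\norm{\nabla f(\bfu(t))}$ is finite. (If one prefers a quantitative estimate, one could walk a straight segment from $\bfu(t)$ to $\bfu\uast$ in steps of length $\delta$ and apply the H{\"o}lder bound piecewise, using $\nabla f(\bfu\uast) = 0$, but this is not needed for the statement.)

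Part~(2) is then immediate from the ODE in \eqref{eq:gflow}: for any $0 \leq t_1 \leq t_2$,
\begin{equation*}
\norm{\bfu(t_2) - \bfu(t_1)}
= \norm{\int_{t_1}^{t_2} \nabla f(\bfu(s))\,\rmd s}
\leq \int_{t_1}^{t_2} \norm{\nabla f(\bfu(s))}\,\rmd s
\leq M (t_2 - t_1),
\end{equation*}
which is exactly the asserted Lipschitz property.

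For part~(3), the natural route is to rewrite the discretization error as a remainder and estimate it via H{\"o}lder continuity. Using the fundamental theorem of calculus,
\begin{equation*}
\bfu(t + \tau) - \bfu(t)
= -\int_0^\tau \nabla f(\bfu(t + s))\,\rmd s,
\end{equation*}
so that
\begin{equation*}
\bfu(t) - \tau \nabla f(\bfu(t)) - \bfu(t + \tau)
= \int_0^\tau \dkh{\nabla f(\bfu(t + s)) - \nabla f(\bfu(t))}\,\rmd s.
\end{equation*}
By part~(2), $\norm{\bfu(t + s) - \bfu(t)} \leq M s \leq M \tau < \delta$ for $s \in [0, \tau]$ and $\tau \in (0, \delta/M)$, so the local H{\"o}lder condition \eqref{eq:Holder} applies pointwise and yields $\norm{\nabla f(\bfu(t + s)) - \nabla f(\bfu(t))} \leq \beta (Ms)^\alpha$. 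Integrating in $s$ gives a bound of $\beta M^\alpha \tau^{1+\alpha}/(1+\alpha) \leq \beta M^\alpha \tau^{1+\alpha} = \bar{\beta}\tau^{1+\alpha}$, as claimed.

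The only step that is not routine is part~(1): one must notice that although the H{\"o}lder hypothesis is only local, the trajectory lives in a compact ball determined by Lemma~\ref{le:solution}, and combine this with the $C^1$ assumption on $f$. Parts~(2) and~(3) are then short consequences of the ODE and the H{\"o}lder estimate on short time intervals enforced by the restriction $\tau < \delta/M$.
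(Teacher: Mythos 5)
Your proposal is correct and follows essentially the same route as the paper: boundedness of the trajectory plus continuity of $\nabla f$ for part (1), the integral form of the ODE for part (2), and the decomposition of the local truncation error as $\int_0^\tau \dkh{\nabla f(\bfu(t+s)) - \nabla f(\bfu(t))}\,\rmd s$ combined with the local H{\"o}lder bound for part (3). The only (immaterial) difference is that you keep the pointwise bound $\beta (Ms)^\alpha$ and gain a factor $1/(1+\alpha)$, whereas the paper bounds the integrand uniformly by $\beta M^\alpha \tau^\alpha$.
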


\begin{proof}
	1. Since $\bfu (t)$ is finite for all $t \geq 0$ and $\bfu (t) \to \bfu\uast$ as $t \to \infty$, the trajectory $\{\bfu (t)\}_{t \geq 0}$ is bounded.
	Then the first assertion directly follows from the continuity of $\nabla f$.
	
	2. For all $t, w \geq 0$, it holds that
	\begin{equation*}
		\norm{\bfu (t) - \bfu (w)}
		= \norm{\int_w^t \nabla f (\bfu (s)) \rmd s}
		\leq \abs{\int_w^t \norm{\nabla f (\bfu (s))} \rmd s}
		\leq M \abs{t - w},
	\end{equation*}
	where the last inequality follows from the fact that $\norm{\nabla f (\bfu (t))} \leq M$ for all $t \geq 0$.
	Thus, the second assertion holds.
	
	3. Now we assume that $\tau \in (0, \delta / M)$.
	Then
	\begin{equation*}
		\norm{\bfu (t + \tau) - \bfu (t)}
		\leq M \tau < \delta,
	\end{equation*}
	which together with the local $\alpha$-H{\"o}lder continuity of $\nabla f$ implies that
	\begin{equation*}
		\norm{\nabla f (\bfu (t + \tau)) - \nabla f (\bfu (t))}
		\leq \beta \norm{\bfu (t + \tau) - \bfu (t)}^\alpha
		\leq \beta M^\alpha \tau^\alpha.
	\end{equation*}
	According to the differential equation in \eqref{eq:gflow}, it follows that
	\begin{equation*}
		\bfu (t) - \bfu (t + \tau) = \int_t^{t + \tau} \nabla f (\bfu (s)) \rmd s,
	\end{equation*}
	and hence,
	\begin{equation*}
		\begin{aligned}
			\norm{\bfu (t) - \tau \nabla f(\bfu (t)) - \bfu (t + \tau)}
			= {} & \norm{\int_t^{t + \tau} \dkh{\nabla f (\bfu (s)) - \nabla f (\bfu (t))} \rmd s} \\
			\leq {} &\int_t^{t + \tau}  \norm{\nabla f (\bfu (s)) - \nabla f (\bfu (t))} \rmd s \\
			\leq {} & \beta M^\alpha \tau^{1 + \alpha}.
		\end{aligned}
	\end{equation*}
	Hence, the third assertion holds.
	We complete the proof.
	\qed
\end{proof}

We are in the position to establish the discretization error of the initial value problem \eqref{eq:gflow}.

\begin{theorem}
	\label{thm:convergence}
	Suppose that Assumption \ref{asp:function} holds.
	Let $\{\bfu_k\}_{k = 0}^K$ be the forward Euler discretization to the solution $\bfu (t)$ of the initial value problem \eqref{eq:gflow} on $[0, T]$ with $K$ time steps and $\tau = T / K$.
	We further assume that
	\begin{equation} \label{eq:cond-K}
		K \geq \max \hkh{ T, \, 4 \mu T, \dfrac{M T}{\delta}, \, \dfrac{C_E^{1 / \alpha} T}{\delta^{1 / \alpha}}, \, \dfrac{(2 \beta^2)^{\frac{1}{(1 - \alpha)^2 + \alpha^2}} T^{1 + \frac{1}{(1 - \alpha)^2 + \alpha^2}}}{C_E^{\frac{2 (1 - \alpha)}{(1 - \alpha)^2 + \alpha^2}}} },
	\end{equation}
	where $C_E = \beta M^\alpha / \mu > 0$.
	Then, for any $0 \leq l \leq K$,
	\begin{equation} \label{eq:El}
		E_l \equiv \norm{\bfu_l - \bfu (\tau l)} \leq C_E \tau^\alpha.
	\end{equation}
\end{theorem}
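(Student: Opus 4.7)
The plan is to establish $E_l \leq C_E \tau^\alpha$ by induction on $l$, with the base case $E_0 = 0$ trivial.

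First I would derive a one-step error recursion. Decomposing
\[
\bfu_{k+1} - \bfu(\tau(k+1)) = \bigl[(\bfu_k - \bfu(\tau k)) - \tau(\nabla f(\bfu_k) - \nabla f(\bfu(\tau k)))\bigr] + \bigl[\bfu(\tau k) - \tau \nabla f(\bfu(\tau k)) - \bfu(\tau(k+1))\bigr],
\]
I would expand the squared norm of the first bracket as $E_k^2 - 2\tau (\nabla f(\bfu_k) - \nabla f(\bfu(\tau k)))\zz(\bfu_k - \bfu(\tau k)) + \tau^2 \|\nabla f(\bfu_k) - \nabla f(\bfu(\tau k))\|^2$; the cross term is bounded below by $2\mu\tau E_k^2$ via $\mu$-strong convexity, and the last term above by $\tau^2\beta^2 E_k^{2\alpha}$ via local $\alpha$-H\"older continuity. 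The H\"older bound requires $E_k \leq \delta$, which the inductive hypothesis and the condition $K \geq C_E^{1/\alpha}T/\delta^{1/\alpha}$ together supply. The second bracket is the local truncation error, bounded by $\bar\beta\tau^{1+\alpha}$ via Lemma~\ref{le:Holder}(3), applicable because $K \geq MT/\delta$ yields $\tau < \delta/M$. The triangle inequality then delivers the master recursion
\[
E_{k+1} \leq \sqrt{(1-2\mu\tau) E_k^2 + \tau^2 \beta^2 E_k^{2\alpha}} + \bar\beta\, \tau^{1+\alpha}.
\]

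For the inductive step, I would substitute $E_k \leq C_E \tau^\alpha$ into the right-hand side, factor $C_E\tau^\alpha$ out of the square root, and apply $\sqrt{1 + z} \leq 1 + z/2$; the requirement $1 - 2\mu\tau \geq 1/2$ is secured by $K \geq 4\mu T$. This separates the damping $-\mu\tau \cdot C_E\tau^\alpha$ from a higher-order correction proportional to $\beta^2 C_E^{2\alpha-1} \tau^{1+\alpha+\gamma}$, where $\gamma = (1-\alpha)^2 + \alpha^2$. By the definition $C_E = \bar\beta/\mu$, the damping exactly cancels the truncation-error contribution $\bar\beta \tau^{1+\alpha}$, and what remains is to show that the correction does not, when accumulated across the $K$ iterations of a discrete Gr\"onwall estimate, consume the $C_E\tau^\alpha$ budget. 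Condition~5 in \eqref{eq:cond-K}, which rearranges to $2\beta^2 T \tau^\gamma \leq C_E^{2(1-\alpha)}$, is calibrated precisely so that the ratio of the summed correction to $C_E\tau^\alpha$ stays below unity.

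The main obstacle is this delicate first-order cancellation. In the Lipschitz setting $\alpha = 1$, strict contraction by $(1-\mu\tau)$ beats a truncation error of order $\tau^2$ via a quick geometric-series argument. For $\alpha < 1$, the H\"older-controlled amplification $\tau^2\beta^2 E_k^{2\alpha}$ scales like $\tau^{2+2\alpha^2}$ rather than $\tau^2 E_k^2$, and the per-step damping is no longer strong enough to absorb it on its own; all of the work is done by this correction. The specific exponent $\gamma = (1-\alpha)^2 + \alpha^2$ in condition~5 is exactly the combined rate that balances the amplification against the available damping and the accumulation horizon $T$, which is why that condition, rather than the simpler Lipschitz stepsize restriction, is the binding constraint on $K$.
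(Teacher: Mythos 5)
Your decomposition, your three one-step estimates (strong convexity for the cross term, local H\"older continuity for the quadratic term, Lemma~\ref{le:Holder}(3) for the truncation error), and your accounting of which entry of \eqref{eq:cond-K} enables which step all coincide with the paper's proof. The gap is in how you close the induction. After substituting $E_k \leq C_E\tau^\alpha$, factoring, and applying $\sqrt{1+z}\leq 1+z/2$, your chain of inequalities yields
\begin{equation*}
E_{k+1} \;\leq\; C_E\tau^\alpha\dkh{1-\mu\tau+\tfrac12\beta^2C_E^{2\alpha-2}\tau^{1+\gamma}}+\bar{\beta}\tau^{1+\alpha}
\;=\; C_E\tau^\alpha+\tfrac12\beta^2C_E^{2\alpha-1}\tau^{1+\alpha+\gamma},
\end{equation*}
with $\gamma=(1-\alpha)^2+\alpha^2$, because $\mu\tau\, C_E\tau^\alpha=\bar{\beta}\tau^{1+\alpha}$ exactly cancels the truncation error. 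This is strictly larger than $C_E\tau^\alpha$, so the inductive step $E_k\leq C_E\tau^\alpha\Rightarrow E_{k+1}\leq C_E\tau^\alpha$ does not follow: there is no ``budget'' left for the accumulated correction to consume, since each single step already overshoots the target. The cancellation you rely on is also an artifact of replacing $E_k$ by its upper bound inside the damping term; when $E_k$ is well below $C_E\tau^\alpha$ the damping $\mu\tau E_k$ is smaller than $\bar{\beta}\tau^{1+\alpha}$ and the errors genuinely grow, so no per-step contraction argument with the fixed threshold $C_E\tau^\alpha$ can work. A discrete Gr\"onwall accumulation would require a strengthened, $k$-dependent induction hypothesis, and if you carry it out on $E_k$ itself (rather than $E_k^2$) the fixed point of the resulting recursion sits at $C_E\tau^\alpha(1+O(\tau^\gamma))$, i.e.\ you only recover the bound with a slightly inflated constant, not \eqref{eq:El} as stated.

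The paper closes the induction differently, and this is the one idea your proposal is missing. It keeps the recursion at the level of squared errors and, instead of cancelling the truncation term against the damping of $E_k$, it assumes for contradiction that $E_{k+1}>C_E\tau^\alpha$; this assumption makes the truncation cross term $2\bar{\beta}\tau^{1+\alpha}E_{k+1}$ absorbable into the coefficient of $E_{k+1}^2$ (since $2\bar{\beta}\tau^{1+\alpha}E_{k+1}\leq (2\bar{\beta}\tau/C_E)E_{k+1}^2=2\mu\tau E_{k+1}^2$), so the factors $(1-2\mu\tau)$ on both sides cancel and one is left with the clean additive recursion $E_{k+1}^2\leq E_k^2+2\beta^2C_E^{2\alpha}\tau^{2+2\alpha^2}$. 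Telescoping from $E_0=0$ and invoking the last entry of \eqref{eq:cond-K} then gives $E_{k+1}^2\leq C_E^2\tau^{2\alpha}$, contradicting the assumption. You correctly diagnosed that condition five is calibrated to the exponent $\gamma$, but the mechanism that makes the constants come out exactly as in \eqref{eq:El} is this contradiction device, not a first-order cancellation plus Gr\"onwall.
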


\begin{proof}
	We seek to prove this theorem by induction.
	The inequality \eqref{eq:El} holds for $l = 0$ since $\bfu_0 = \bfu (0)$.
	Now we assume that \eqref{eq:El} holds for all $0 \leq l \leq k$ and investigate the situation for $l = k + 1$.
	
	For convenience, we define
	\begin{equation*}
		\bfy (t) = \bfu (t) - \tau \nabla f (\bfu (t)).
	\end{equation*}
	Let $t_k = \tau k$ and $\bfy_k = \bfy (t_k)$.
	Then a straightforward verification reveals that
	\begin{equation} \label{eq:yk-uk+1}
	\begin{aligned}
		\norm{\bfy_k - \bfu_{k + 1}}^2
		= {} & \norm{\bfu (t_k) - \bfu_k - \tau \dkh{\nabla f (\bfu (t_k)) - \nabla f (\bfu_k)}}^2 \\
		= {} & E_k^2
		- 2 \tau \dkh{\nabla f (\bfu (t_k)) - \nabla f (\bfu_k)}\zz \dkh{\bfu (t_k) - \bfu_k} \\
		& + \tau^2 \norm{\nabla f (\bfu (t_k)) - \nabla f (\bfu_k)}^2.
	\end{aligned}
	\end{equation}
	Moreover, the strong convexity of $f$ yields that
	\begin{equation} \label{eq:nabla-g}
		\dkh{\nabla f (\bfu (t_k)) - \nabla f (\bfu_k)}\zz \dkh{\bfu (t_k) - \bfu_k}
		\geq \mu \norm{\bfu (t_k) - \bfu_k}^2
		= \mu E_k^2.
	\end{equation}
	Since $\norm{\bfu (t_k) - \bfu_k} = E_k \leq C_E \tau^\alpha \leq \delta$, it follows from the local $\alpha$-H{\"o}lder continuity of $\nabla f$ that
	\begin{equation} \label{eq:nabla-l}
	\begin{aligned}
		\norm{\nabla f (\bfu (t_k)) - \nabla f (\bfu_k)}^2
		\leq {} & \beta^2 \norm{\bfu (t_k) - \bfu_k}^{2 \alpha} \\
		= {} & \beta^2 E_k^{2 \alpha}
		\leq \beta^2 C_E^{2 \alpha} \tau^{2 \alpha^2}.
	\end{aligned}
	\end{equation}
	Combining the three relationships \eqref{eq:yk-uk+1}, \eqref{eq:nabla-g}, and \eqref{eq:nabla-l} together, we can obtain that
	\begin{equation} \label{eq:u-y-l}
		\norm{\bfy_k - \bfu_{k + 1}}^2
		\leq \dkh{1 - 2 \mu \tau} E_k^2 + \beta^2 C_E^{2 \alpha} \tau^{2 + 2 \alpha^2}.
	\end{equation}
	Since $\tau \leq \delta / M$, it can be deduced from Lemma \ref{le:Holder} that
	\begin{equation*}
		\norm{\bfy_k - \bfu (t_{k + 1})} \leq \bar{\beta} \tau^{1 + \alpha}.
	\end{equation*}
	Then by virtue of the triangle inequality, we have
	\begin{equation*}
	\begin{aligned}
		\norm{\bfu_{k + 1} - \bfy_k}^2
		\geq {} & \abs{\norm{\bfu_{k + 1} - \bfu (t_{k + 1})} - \norm{\bfu (t_{k + 1}) - \bfy_k}}^2 \\
		= {} & E_{k + 1}^2 - 2 E_{k + 1} \norm{\bfu (t_{k + 1}) - \bfy_k} + \norm{\bfu (t_{k + 1}) - \bfy_k}^2 \\
		\geq {} & E_{k + 1}^2 - 2 E_{k + 1} \norm{\bfu (t_{k + 1}) - \bfy_k} \\
		\geq {} & E_{k + 1}^2 - 2 \bar{\beta} \tau^{1 + \alpha} E_{k + 1}.
	\end{aligned}
	\end{equation*}
	
	Now we assume for contradiction that $E_{k+1} > C_E \tau^\alpha$.
	Then it can be readily verified that
	\begin{equation} \label{eq:u-y-g}
	\begin{aligned}
		\norm{\bfu_{k + 1} - \bfy_k}^2
		\geq \dkh{1 - \dfrac{2 \bar{\beta} \tau}{C_E}} E_{k + 1}^2
		= \dkh{1 - 2 \mu \tau} E_{k + 1}^2.
	\end{aligned}
	\end{equation}
	In light of the relationship $\tau \leq 1 / (4 \mu)$, it can be obtained by combining \eqref{eq:u-y-l} and \eqref{eq:u-y-g} together that
	\begin{equation*}
		E_{k + 1}^2
		\leq E_k^2 + \dfrac{\beta^2 C_E^{2 \alpha} \tau^{2 + 2 \alpha^2}}{1 - 2 \mu \tau}
		\leq E_k^2 + 2 \beta^2 C_E^{2 \alpha} \tau^{2 + 2 \alpha^2}.
	\end{equation*}
	Since $E_0 = 0$, we can proceed to show that
	\begin{equation*}
	\begin{aligned}
		E_{k + 1}^2
		\leq {} & 2 (k + 1) \beta^2 C_E^{2 \alpha} \tau^{2 + 2 \alpha^2} \\
		\leq {} & 2 T \beta^2 C_E^{2 \alpha} \tau^{1 + 2 \alpha^2} \\
		= {} & 2 T \beta^2 C_E^{2 (\alpha - 1)} \tau^{(1 - \alpha)^2 + \alpha^2} \times C_E^{2} \tau^{2 \alpha} \\
		\leq {} & C_E^{2} \tau^{2 \alpha},
	\end{aligned}
	\end{equation*}
	where the last inequality follows from $\tau=T/K$ and condition \eqref{eq:cond-K}.
	This results in a contradiction to $E_{k+1} > C_E \tau^\alpha$.
	The proof is completed.
	\qed
\end{proof}

\section{A Basic Complexity Result}
\label{sec:complexity}

In this section, we apply Theorem \ref{thm:convergence} to obtain a complexity result of the gradient descent method \eqref{eq:gd} without any additional conditions.
We begin by defining the time required to get close to the minimizer of $f$.

\begin{definition}
	Let $\bfu (t)$ be the unique solution of problem \eqref{eq:gflow} and let $\bfu\uast = \lim_{t \to \infty} \bfu (t)$.
	For $\eta > 0$, we define
	\begin{equation*}
		T\uast (\eta) = \min \hkh{ T \,\middle|\, \norm{\bfu (t) - \bfu\uast} \leq \eta \mbox{ for all } t \geq T }.
	\end{equation*}
\end{definition}

According to Lemma \ref{le:solution}, we know that
\begin{equation*}
	\norm{\bfu (t) - \bfu\uast} \leq e^{-\mu t} \norm{\bfu_0 - \bfu\uast}.
\end{equation*}
Solving $e^{-\mu t} \norm{\bfu_0 - \bfu\uast} \leq \eta$ gives rise to that
\begin{equation*}
	T\uast (\eta) \leq \frac{1}{\mu} \log \dkh{ \frac{\norm{\bfu_0 - \bfu\uast}}{\eta} },
\end{equation*}
which provides an upper bound of $T\uast (\eta)$.

Next, we provide an estimate for the total number of time steps required by the forward Euler discretization to get close to the minimizer of $f$.

\begin{theorem}
	\label{thm:complexity}
	Suppose that Assumption \ref{asp:function} holds and $\eta > 0$ is a sufficiently small constant.
	Let $\{\bfu_k\}_{k = 0}^K$ be the forward Euler discretization to the solution $\bfu (t)$ of the initial value problem \eqref{eq:gflow} on $[0, T\uast (\eta)]$ with $K$ time steps and $\tau = T\uast (\eta) / K$.
	We further assume that $K$ satisfies condition \eqref{eq:cond-K} and
	\begin{equation*}
		K \geq K_{\bfu} (\eta) \equiv \mu^{-1/\alpha} \beta^{1/\alpha} M T\uast (\eta) \eta^{-1/\alpha}.
	\end{equation*}
	Then
	\begin{equation*}
		\norm{\bfu_K - \bfu (T\uast (\eta))} \leq \eta.
	\end{equation*}
\end{theorem}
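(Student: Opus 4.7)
The plan is to obtain this as a more or less direct corollary of Theorem~\ref{thm:convergence} applied on the specific interval $[0, T\uast(\eta)]$. Since the hypotheses of Theorem~\ref{thm:convergence} are assumed (condition \eqref{eq:cond-K} with $T = T\uast(\eta)$), I can invoke its conclusion at the last grid point $l = K$, which gives
\begin{equation*}
	\norm{\bfu_K - \bfu(\tau K)} = E_K \leq C_E \tau^{\alpha},
\end{equation*}
with $\tau = T\uast(\eta)/K$ and $C_E = \beta M^{\alpha}/\mu$. Since $\tau K = T\uast(\eta)$, the left-hand side is exactly the quantity we want to bound by $\eta$.

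The remaining task is therefore purely algebraic: translate the requirement $C_E \tau^{\alpha} \leq \eta$ into a lower bound on $K$ and check that this lower bound coincides with $K_{\bfu}(\eta)$. Substituting $\tau = T\uast(\eta)/K$ and $C_E = \beta M^{\alpha}/\mu$, the inequality $C_E \tau^{\alpha} \leq \eta$ rearranges to
\begin{equation*}
	K^{\alpha} \geq \frac{\beta M^{\alpha}}{\mu} \cdot \frac{T\uast(\eta)^{\alpha}}{\eta},
\end{equation*}
and taking the $\alpha$-th root and simplifying yields exactly $K \geq \mu^{-1/\alpha}\beta^{1/\alpha} M\, T\uast(\eta)\, \eta^{-1/\alpha} = K_{\bfu}(\eta)$. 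So the hypothesis $K \geq K_{\bfu}(\eta)$ is tailored precisely to make the Theorem~\ref{thm:convergence} discretization bound collapse to $\eta$.

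I would present this in three short steps: (i) verify that Theorem~\ref{thm:convergence} is applicable with $T = T\uast(\eta)$ (guaranteed by the assumption that condition \eqref{eq:cond-K} holds); (ii) apply it at $l = K$ to get $\norm{\bfu_K - \bfu(T\uast(\eta))} \leq C_E \tau^{\alpha}$; (iii) use $K \geq K_{\bfu}(\eta)$ to conclude $C_E \tau^{\alpha} \leq \eta$. There is no real obstacle here: the whole content of the theorem is packaged inside Theorem~\ref{thm:convergence}, and the proof is just bookkeeping. The only minor thing to be careful about is that $\eta$ is ``sufficiently small,'' which one should read as saying $\eta$ should be small enough that the $K_{\bfu}(\eta)$ lower bound is compatible with (and in fact dominates, up to the logarithmic factor coming from $T\uast(\eta)$) the static bounds in \eqref{eq:cond-K}, so the two conditions on $K$ can be met simultaneously by a single integer.
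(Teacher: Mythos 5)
Your proposal is correct and follows exactly the same route as the paper: invoke Theorem~\ref{thm:convergence} at $l = K$ to get $\norm{\bfu_K - \bfu(T\uast(\eta))} \leq C_E \tau^\alpha$ with $C_E = \bar{\beta}/\mu = \beta M^\alpha/\mu$, then observe that $K \geq K_{\bfu}(\eta)$ is precisely the condition making $C_E (T\uast(\eta)/K)^\alpha \leq \eta$. The paper's proof is the same one-line chain of inequalities; you have merely spelled out the algebra more explicitly.
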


\begin{proof}
	As a direct consequence of Theorem \ref{thm:convergence}, we can proceed to show that
	\begin{equation*}
		\norm{\bfu_K - \bfu (T\uast (\eta))}
		\leq C_E \tau^\alpha
		\leq \dfrac{\bar{\beta}}{\mu} \dkh{\frac{T\uast (\eta)}{K}}^\alpha
		\leq \eta,
	\end{equation*}
	where the last inequality is satisfied with $\bar{\beta} = \beta M^\alpha$ when $K \geq K_{\bfu} (\eta)$.
	The proof is completed.
	\qed
\end{proof}

It is noteworthy that the sequence $\{\bfu_{k}\}$ generated by the gradient descent method \eqref{eq:gd} with the initial point $\bfu_{0}$ is exactly the forward Euler discretization to the solution $\bfu (t)$ of the initial value problem \eqref{eq:gflow}.
Therefore, according to Theorem \ref{thm:complexity}, we can derive a complexity result of the gradient descent method \eqref{eq:gd} with a fixed stepsize, which is demonstrated in the following corollary.


\begin{corollary}
	\label{coro:complexity}
	Suppose that Assumption \ref{asp:function} holds and $\varepsilon > 0$ is a sufficiently small constant.
	Let $\{\bfu_k\}$ be the sequence generated by the gradient descent method \eqref{eq:gd} with the stepsize $\tau$ chosen as
	\begin{equation*}
		\tau = 2^{-1/\alpha} \mu^{1/\alpha} \beta^{-1/\alpha} M^{-1} \varepsilon^{1/\alpha}.
	\end{equation*}
	Then after at most
	\begin{equation*}
		\left\lceil 2^{1/\alpha} \mu^{-1-1/\alpha} \beta^{1/\alpha} M \log \dkh{\dfrac{2 \norm{\bfu_0 - \bfu\uast}}{\varepsilon}} \varepsilon^{-1/\alpha} \right\rceil
	\end{equation*}
	iterations, we will reach an iterate $\bfu_k$ satisfying $\norm{\bfu_k - \bfu\uast} \leq \varepsilon$.
\end{corollary}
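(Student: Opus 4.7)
The plan is to apply Theorem~\ref{thm:complexity} with the tolerance $\eta = \varepsilon/2$ and then use the triangle inequality
\begin{equation*}
    \norm{\bfu_K - \bfu\uast} \leq \norm{\bfu_K - \bfu(T\uast(\eta))} + \norm{\bfu(T\uast(\eta)) - \bfu\uast}
\end{equation*}
to split the total error into a discretization error (bounded by $\eta = \varepsilon/2$ via Theorem~\ref{thm:complexity}) and a flow error (bounded by $\eta = \varepsilon/2$ by definition of $T\uast(\eta)$). Since the sequence $\{\bfu_k\}$ produced by \eqref{eq:gd} with any fixed stepsize $\tau$ is exactly the forward Euler discretization of \eqref{eq:gflow} with that stepsize, it suffices to choose $\tau$ and $K$ so that the discretization grid matches the statement of Theorem~\ref{thm:complexity} while recovering the advertised stepsize formula.

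Next I would compute the quantities in Theorem~\ref{thm:complexity} with $\eta = \varepsilon/2$. The Gr{\"o}nwall-type bound from Lemma~\ref{le:solution} gives
\begin{equation*}
    T\uast (\varepsilon/2) \leq \dfrac{1}{\mu} \log \dkh{ \dfrac{2 \norm{\bfu_0 - \bfu\uast}}{\varepsilon} }.
\end{equation*}
Plugging $\eta = \varepsilon/2$ into the formula for $K_{\bfu}(\eta)$ yields
\begin{equation*}
    K_{\bfu}(\varepsilon/2) = 2^{1/\alpha} \mu^{-1/\alpha} \beta^{1/\alpha} M \, T\uast(\varepsilon/2) \, \varepsilon^{-1/\alpha},
\end{equation*}
and combining this with the upper bound on $T\uast(\varepsilon/2)$ recovers the iteration count stated in the corollary (after taking the ceiling). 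Solving $\tau = T\uast(\varepsilon/2)/K_{\bfu}(\varepsilon/2)$ then gives precisely $\tau = 2^{-1/\alpha} \mu^{1/\alpha} \beta^{-1/\alpha} M^{-1} \varepsilon^{1/\alpha}$, which matches the prescribed stepsize.

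Finally, I would verify that the auxiliary condition \eqref{eq:cond-K} on $K$ is satisfied for all sufficiently small $\varepsilon$; this is the step that requires the ``$\varepsilon$ sufficiently small'' hypothesis. Each of the five lower bounds in \eqref{eq:cond-K} grows at most polylogarithmically in $\varepsilon^{-1}$, whereas $K_{\bfu}(\varepsilon/2)$ grows like $\varepsilon^{-1/\alpha}\log(\varepsilon^{-1})$, so \eqref{eq:cond-K} is automatic once $\varepsilon$ is small. Once these conditions are in place, Theorem~\ref{thm:complexity} gives $\norm{\bfu_K - \bfu(T\uast(\varepsilon/2))} \leq \varepsilon/2$, the flow estimate gives $\norm{\bfu(T\uast(\varepsilon/2)) - \bfu\uast} \leq \varepsilon/2$, and the triangle inequality yields $\norm{\bfu_K - \bfu\uast} \leq \varepsilon$. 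The only mildly delicate point is bookkeeping: one must check that the discretization-error bound of Theorem~\ref{thm:complexity}, proved for the specific choice $\tau = T\uast(\eta)/K$, is invoked with exactly the $(\tau, K)$ pair appearing in this corollary, which the calculation above confirms.
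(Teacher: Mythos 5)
Your proposal is correct and follows essentially the same route as the paper: apply Theorem~\ref{thm:complexity} with $\eta = \varepsilon/2$, choose $\tau = T\uast(\varepsilon/2)/K_{\bfu}(\varepsilon/2)$, and split the error via the triangle inequality into a discretization part and a flow part, each bounded by $\varepsilon/2$. Your additional checks (that this $\tau$ matches the stated formula, that $K_{\bfu}(\varepsilon/2)$ matches the stated iteration count, and that condition~\eqref{eq:cond-K} holds automatically for small $\varepsilon$) are correct and in fact slightly more explicit than the paper's own proof.
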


\begin{proof}
	According to Theorem \ref{thm:complexity}, we choose the stepsize $\tau = T\uast (\varepsilon / 2) / K_{\bfu} (\varepsilon / 2)$ for a sufficiently small $\varepsilon > 0$.
	Then it holds that
	\begin{equation*}
	\begin{aligned}
		\norm{\bfu_{K_{\bfu} (\varepsilon / 2)} - \bfu\uast}
		\leq {} & \norm{\bfu_{K_{\bfu} (\varepsilon / 2)} - \bfu (T\uast (\varepsilon / 2))}
		+ \norm{\bfu (T\uast (\varepsilon / 2)) - \bfu\uast} \\
		\leq {} & \dfrac{\varepsilon}{2} + \dfrac{\varepsilon}{2}
		= \varepsilon,
	\end{aligned}
	\end{equation*}
	which completes the proof.
	\qed
\end{proof}



\section{A Refined Complexity Result}

In this section, we present a more refined complexity result for the gradient descent method \eqref{eq:gd}.
To this end, we assume that the initial point $\bfu_{0}$ satisfies the condition $\norm{\bfu_{0} - \bfu\uast} \leq \delta$.
It is noteworthy that this condition is not particularly stringent.
As shown by the complexity results derived in Section \ref{sec:complexity}, an initial point satisfying this requirement can always be obtained within a finite number of iterations by the gradient descent method \eqref{eq:gd}.

\begin{theorem} \label{thm:refined}
	Suppose that Assumption \ref{asp:function} holds and $\varepsilon > 0$ is a sufficiently small constant.
	Let $\{\bfu_k\}$ be the sequence generated by the gradient descent method \eqref{eq:gd} with $\norm{\bfu_{0} - \bfu\uast} \leq \delta$ and
	\begin{equation*}
		\tau = \mu \beta^{-2} \varepsilon^{2 - 2 \alpha}.
	\end{equation*}
	Then after at most
	\begin{equation*}
		\left\lceil \dfrac{2 \beta^2}{\mu^2} \log \dkh{\dfrac{\norm{\bfu_{0} - \bfu\uast}}{\varepsilon}} \varepsilon^{2 \alpha - 2} \right\rceil
	\end{equation*}
	iterations, we will reach an iterate $\bfu_k$ satisfying $\norm{\bfu_k - \bfu\uast} \leq \varepsilon$.
\end{theorem}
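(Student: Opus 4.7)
The plan is to run the standard strongly-convex gradient-descent contraction argument but to control the gradient difference by the local $\alpha$-H\"older bound rather than a Lipschitz bound, and to exploit the fact that while $\|\bfu_k-\bfu\uast\|\ge\varepsilon$ we can convert the Hölder-type $e_k^{2\alpha}$ term into something proportional to $e_k^2$ at the cost of a factor $\varepsilon^{2\alpha-2}$. Write $e_k=\|\bfu_k-\bfu\uast\|$. Using $\nabla f(\bfu\uast)=0$ and expanding $\|\bfu_{k+1}-\bfu\uast\|^2$, strong convexity handles the cross term and the Hölder bound \eqref{eq:Holder} (legitimate because we will show $e_k\le\delta$) controls the quadratic term, yielding
\begin{equation*}
  e_{k+1}^2 \le (1-2\mu\tau)e_k^2 + \tau^2\beta^2 e_k^{2\alpha},
\end{equation*}
provided $e_k\le\delta$.

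Next I would carry out an induction showing simultaneously that $e_k\le\delta$ and, as long as $e_k\ge\varepsilon$, that $e_{k+1}^2\le(1-\mu\tau)e_k^2$. The base case is the hypothesis $e_0\le\delta$. For the inductive step, while $e_k\ge\varepsilon$, since $2\alpha-2\le 0$ we have
\begin{equation*}
  e_k^{2\alpha} \;=\; e_k^{2\alpha-2}\cdot e_k^2 \;\le\; \varepsilon^{2\alpha-2}\, e_k^2 ,
\end{equation*}
so the recursion above becomes $e_{k+1}^2\le(1-2\mu\tau+\tau^2\beta^2\varepsilon^{2\alpha-2})e_k^2$. The prescribed choice $\tau=\mu\beta^{-2}\varepsilon^{2-2\alpha}$ is precisely the one that makes $\tau^2\beta^2\varepsilon^{2\alpha-2}=\mu\tau$, giving the clean contraction
\begin{equation*}
  e_{k+1}^2 \le (1-\mu\tau)\,e_k^2.
\end{equation*}
In particular $e_{k+1}\le e_k\le\delta$, so the induction hypothesis is preserved. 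Of course if at some iteration $e_k<\varepsilon$ we are already done.

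Iterating the contraction from $e_0$ gives $e_k^2\le(1-\mu\tau)^k e_0^2$, so the target $e_k\le\varepsilon$ is reached as soon as $(1-\mu\tau)^k\le\varepsilon^2/e_0^2$. Using $\log(1/(1-\mu\tau))\ge\mu\tau$ (valid because for $\varepsilon$ small enough, $\mu\tau=\mu^2\beta^{-2}\varepsilon^{2-2\alpha}\in(0,1)$; this is where the ``sufficiently small $\varepsilon$'' hypothesis enters), it suffices that
\begin{equation*}
  k \;\ge\; \frac{2\log(e_0/\varepsilon)}{\mu\tau} \;=\; \frac{2\beta^2}{\mu^2}\,\log\!\dkh{\frac{\norm{\bfu_0-\bfu\uast}}{\varepsilon}}\,\varepsilon^{2\alpha-2},
\end{equation*}
which is exactly the ceiling stated in the theorem.

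The argument is essentially routine once the right viewpoint is adopted; the only subtle point, and the step I would flag as the heart of the proof, is the balancing of the two terms in the one-step inequality by the specific choice $\tau=\mu\beta^{-2}\varepsilon^{2-2\alpha}$ together with the replacement $e_k^{2\alpha}\le\varepsilon^{2\alpha-2}e_k^2$, which is only valid in the ``pre-target'' region $e_k\ge\varepsilon$. Everything else (monotonicity keeping the iterates inside the Hölder ball of radius $\delta$, the logarithmic inversion, and the final iteration count) is a direct consequence.
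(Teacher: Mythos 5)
Your proposal is correct and follows essentially the same route as the paper: the same one-step expansion with strong convexity controlling the cross term and the local H\"older bound controlling the quadratic term, the same substitution $e_k^{2\alpha}\le\varepsilon^{2\alpha-2}e_k^2$ in the pre-target region, the same induction keeping the iterates within the radius-$\delta$ ball, and the same logarithmic inversion via $-\log(1-x)\ge x$. No gaps.
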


\begin{proof}
	Let $K\uast_\varepsilon$ be the smallest iteration number $k$ such that $\norm{\bfu_k - \bfu\uast} \leq \varepsilon$ holds.
	We first prove that $\norm{\bfu_{k} - \bfu\uast} \leq \delta$ for all $k < K\uast_\varepsilon$ by induction.
	It is obvious that this assertion holds for $k = 0$ by our initialization.
	Now we assume that $\norm{\bfu_{k} - \bfu\uast} \leq \delta$ for some $k < K\uast_\varepsilon$.
	
	By virtue of the strong convexity of $f$, it holds that
	\begin{equation*}
		\jkh{\nabla f (\bfu_{k}), \bfu_{k} - \bfu\uast}
		= \jkh{\nabla f (\bfu_{k}) - \nabla f (\bfu\uast), \bfu_{k} - \bfu\uast}
		\geq \mu \norm{\bfu_{k} - \bfu\uast}^2.
	\end{equation*}
	Since we have $\norm{\bfu_{k} - \bfu\uast} \leq \delta$, the local $\alpha$-H{\"o}lder continuity of $\nabla f$ results in that
	\begin{equation*}
		\norm{\nabla f (\bfu_{k})}^2
		= \norm{\nabla f (\bfu_{k}) - \nabla f (\bfu\uast)}^2
		\leq \beta^2 \norm{\bfu_{k} - \bfu\uast}^{2 \alpha}.
	\end{equation*}
	Hence, it holds that
	\begin{equation*}
	\begin{aligned}
		\norm{\bfu_{k + 1} - \bfu\uast}^2
		= {} & \norm{\bfu_{k} - \tau \nabla f (\bfu_{k}) - \bfu\uast}^2 \\
		= {} & \norm{\bfu_{k} - \bfu\uast}^2
		- 2 \tau \jkh{\nabla f (\bfu_{k}), \bfu_{k} - \bfu\uast}
		+ \tau^2 \norm{\nabla f (\bfu_{k})}^2 \\
		\leq {} & \norm{\bfu_{k} - \bfu\uast}^2
		- 2 \mu \tau \norm{\bfu_{k} - \bfu\uast}^2
		+ \beta^2 \tau^2 \norm{\bfu_{k} - \bfu\uast}^{2 \alpha} \\
		= {} & \dkh{1 - 2 \mu \tau + \beta^2 \tau^2 \norm{\bfu_{k} - \bfu\uast}^{2 \alpha - 2}} \norm{\bfu_{k} - \bfu\uast}^2.
	\end{aligned}
	\end{equation*}
	According to the definition of $K\uast_\varepsilon$, we have $\norm{\bfu_k - \bfu\uast} > \varepsilon$ for any $k < K\uast_\varepsilon$.
	Then a straightforward verification reveals that
	\begin{equation} \label{eq:u-k+1}
	\begin{aligned}
		\norm{\bfu_{k + 1} - \bfu\uast}^2
		\leq {} & \dkh{1 - 2 \mu \tau + \beta^2 \tau^2 \varepsilon^{2 \alpha - 2}} \norm{\bfu_{k} - \bfu\uast}^2 \\
		= {} & \dkh{1 - \mu^2 \beta^{-2} \varepsilon^{2 - 2 \alpha}} \norm{\bfu_{k} - \bfu\uast}^2,
	\end{aligned}
	\end{equation}
	which further implies that
	\begin{equation*}
		\norm{\bfu_{k + 1} - \bfu\uast} \leq \norm{\bfu_{k} - \bfu\uast}
		\leq \delta.
	\end{equation*}
	Consequently, we can conclude that $\norm{\bfu_{k} - \bfu\uast} \leq \delta$ for all $k < K\uast_\varepsilon$.
	
	As a direct consequence of \eqref{eq:u-k+1}, we can proceed to show that
	\begin{equation*}
		\norm{\bfu_{k} - \bfu\uast}^2
		\leq \dkh{1 - \mu^2 \beta^{-2} \varepsilon^{2 - 2 \alpha}}^k \norm{\bfu_{0} - \bfu\uast}^2,
	\end{equation*}
	for all $k < K\uast_\varepsilon$.
	Solving $\dkh{1 - \mu^2 \beta^{-2} \varepsilon^{2 - 2 \alpha}}^k \norm{\bfu_{0} - \bfu\uast}^2 \leq \varepsilon^2$ gives rise to that
	\begin{equation*}
		K\uast_\varepsilon
		\leq \dfrac{2 \log \dkh{\norm{\bfu_{0} - \bfu\uast} / \varepsilon}}{- \log \dkh{1 - \mu^2 \beta^{-2} \varepsilon^{2 - 2 \alpha}}}
		\leq \dfrac{2 \beta^2}{\mu^2} \log \dkh{\dfrac{\norm{\bfu_{0} - \bfu\uast}}{\varepsilon}} \varepsilon^{2 \alpha - 2}.
	\end{equation*}
	The proof is completed.
	\qed
\end{proof}

Theorem \ref{thm:refined} demonstrates that the iteration complexity of the gradient descent method \eqref{eq:gd} is $O (\log (\varepsilon^{-1}) \varepsilon^{2 \alpha - 2})$ with a fixed stepsize for $\alpha \in (0, 1]$.
This complexity result generalizes the classical linear convergence when $\alpha = 1$ and highlights the performance degradation incurred by non-Lipschitz gradients.

\section{Numerical Experiments}

Preliminary numerical results are presented in this section to provide additional insights into the performance guarantees of the gradient descent method \eqref{eq:gd}.
We aim to elucidate that the final error attained by the gradient descent method \eqref{eq:gd} is influenced by both the stepsize $\tau$ and the H{\"o}lder exponent $\alpha$.
All codes are implemented in MATLAB R2018b on a workstation with dual Intel Xeon Gold 6242R CPU processors (at $3.10$ GHz$\times 20 \times 2$) and $510$ GB of RAM under Ubuntu 20.04.

We consider a numerical example from \cite{Barrett1991finite}.
This problem is to solve the following PDE,
\begin{equation} \label{eq:cF}
	\cF (u) = - \nabla^2 u + \nu u_+^{1/2} = 0,
\end{equation}
on the unit square $D = (0, 1) \times (0, 1)$ with the following boundary conditions,
\begin{equation*}
	u = 1 \mbox{~on~} \partial D.
\end{equation*}
Here, $\nu > 0$ is a constant.
It should be noted that $\cF$ is the gradient of the following energy functional,
\begin{equation*}
	\hat{f} (u) = \frac{1}{2} \|\nabla u\|^2 + \frac{2 \nu}{3} \int_D u_+^{3/2} (y) \, \rmd y.
\end{equation*}

Discretizing \eqref{eq:cF} with the standard five point difference scheme \cite{LeVeque2007finite} leads to the following nonlinear system,
\begin{equation*}
	\bfF (\bfu) = \bfL \bfu + \nu \bfu_+^{1/2} - \bfb = 0,
\end{equation*}
where $\bfL$ is the discretization of $- \nabla^2$ with zero boundary conditions, $\bfb$ encodes the boundary conditions, and $\bfu_+^{1/2}$ is understood as a component-wise operation.
Then the discrete problem has the same properties as the continuous problem.
It is noteworthy that the matrix $\bfL$ is symmetric positive definite.
Moreover, we have $\bfF = \nabla \bar{f}$ and $\bar{f}$ is the discretization of $\hat{f}$ as follows,
\begin{equation*}
	\bar{f} (\bfu) = \dfrac{1}{2} \bfu\zz \bfL \bfu + \frac{2\nu}{3} \bfe\zz \bfu_+^{3/2} - \bfb\zz \bfu,
\end{equation*}
where $\bfe$ is the vector of all ones.



To evaluate the performance of the gradient descent method \eqref{eq:gd}, we focus on the following optimization problem inspired by the above PDE model,
\begin{equation} \label{opt:test}
	\min_{\bfu \in \Rn} f (\bfu) = \dfrac{1}{2} \bfu\zz \bfA \bfu + \dfrac{1}{1 + \alpha} \bfe\zz \bfu_{+}^{1 + \alpha} - \bfc\zz \bfu,
\end{equation}
where $\bfA \in \Rnn$ is a symmetric positive definite matrix, $\alpha \in (0, 1)$ is a constant, and $\bfc = \bfA \bfu\uast + (\bfu\uast)_{+}^{\alpha} \in \Rn$ is a vector with $\bfu\uast \in \Rn$.
It is evident that the objective function $f$ is strongly convex with $\mu=\lambda_{\min}(\bfA)$ and its gradient $\nabla f$ is locally $\alpha$-H{\"o}lder continuous with $\beta = 1 +\lambda_{\max}(\bfA)$ and $\delta= 1$.
Moreover, a straightforward verification reveals that $\bfu\uast$ is the minimizer of problem \eqref{opt:test}.

In our numerical experiments, the initial point $\bfu_{0}$, the minimizer $\bfu\uast$ and the matrix $\bfA$ in the test problem \eqref{opt:test} are generated randomly, and the vector $\bfc$ is defined by $\bfu\uast, \bfA$ and $\alpha$ with the detailed MATLAB code provided as follows.
\begin{lstlisting}
	u_0 = randn(n, 1);
	u_star = randn(n, 1);
	A = randn(n); A = A'*A + eye(n);
	c = A*u_star + max(u_star, 0).^alpha;
\end{lstlisting}
Moreover, the test problem dimension is fixed at $n = 50$, and method \eqref{eq:gd} is permitted a maximum of $10000$ iterations.

In the first experiment, we scrutinize the performance of the gradient descent method \eqref{eq:gd} under different stepsizes.
Specifically, with the parameter $\alpha$ fixed at $0.5$, the algorithm is tested for stepsizes chosen from the set $\{0.01, 0.005, 0.001, 0.0005\}$.
The corresponding numerical results, presented in Figure \ref{subfig:stepsize}, illustrate the decay of the distance between the iterates and the global minimizer over iterations.
It can be observed that a larger stepsize facilitates a more rapid descent  in the early stage of iterations, albeit at the expense of a greater asymptotic error.
This phenomenon corroborates our theoretical predictions.

In the second experiment, the stepsize $\tau$ is fixed at $0.001$, while the parameter $\alpha$ is varied over the values $\{0.2, 0.4, 0.6, 0.8\}$.
Figure \ref{subfig:alpha} similarly tracks the decay of the distance to the global minimizer over iterations.
It is evident that, as the value of $\alpha$ decreases, the final error attained by the algorithm increases under the same stepsize.
Therefore, the associated optimization problems become increasingly ill-conditioned and thus more challenging to solve for smaller values of $\alpha$.
These findings offer empirical support for our theoretical analysis.

\begin{figure}[t]
	\centering
	\subfigure[different stepsizes]{
		\label{subfig:stepsize}
		\includegraphics[width=0.476\linewidth]{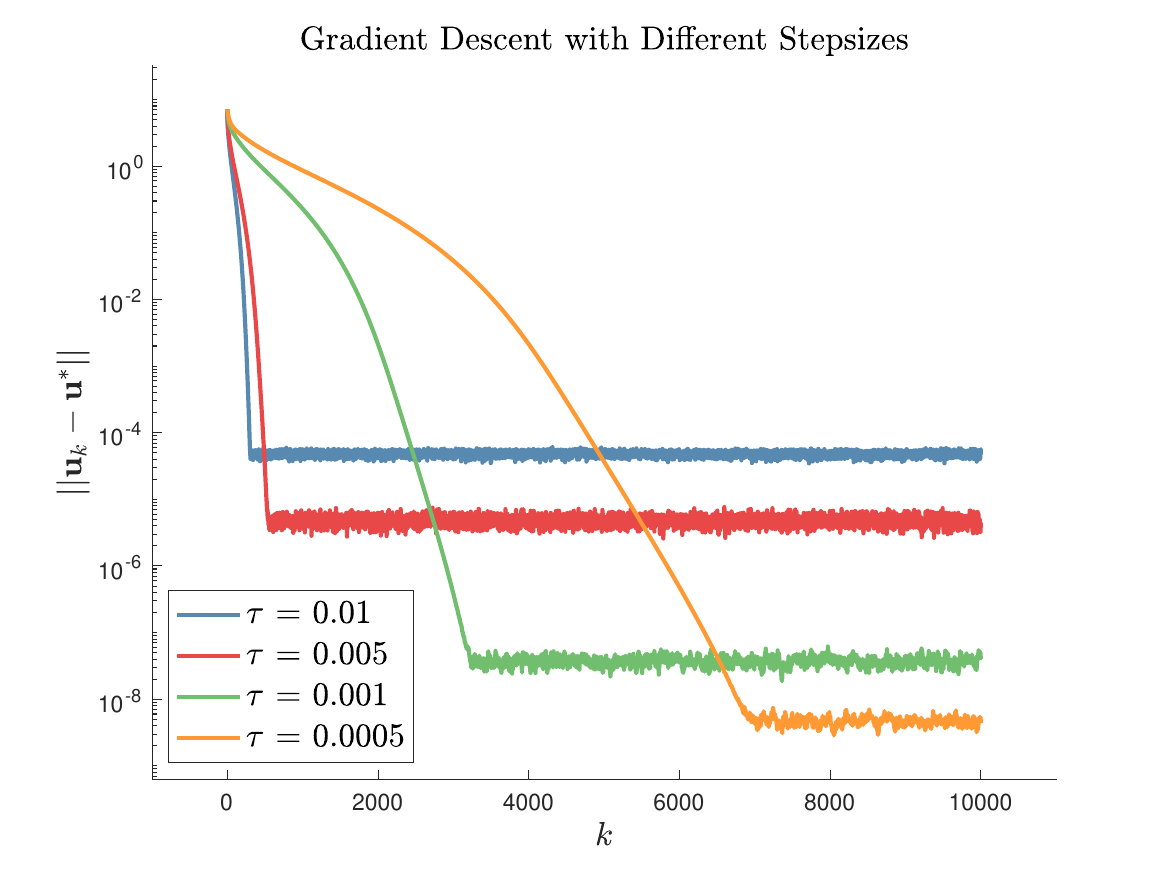}
	}
	\subfigure[different values of $\alpha$]{
		\label{subfig:alpha}
		\includegraphics[width=0.476\linewidth]{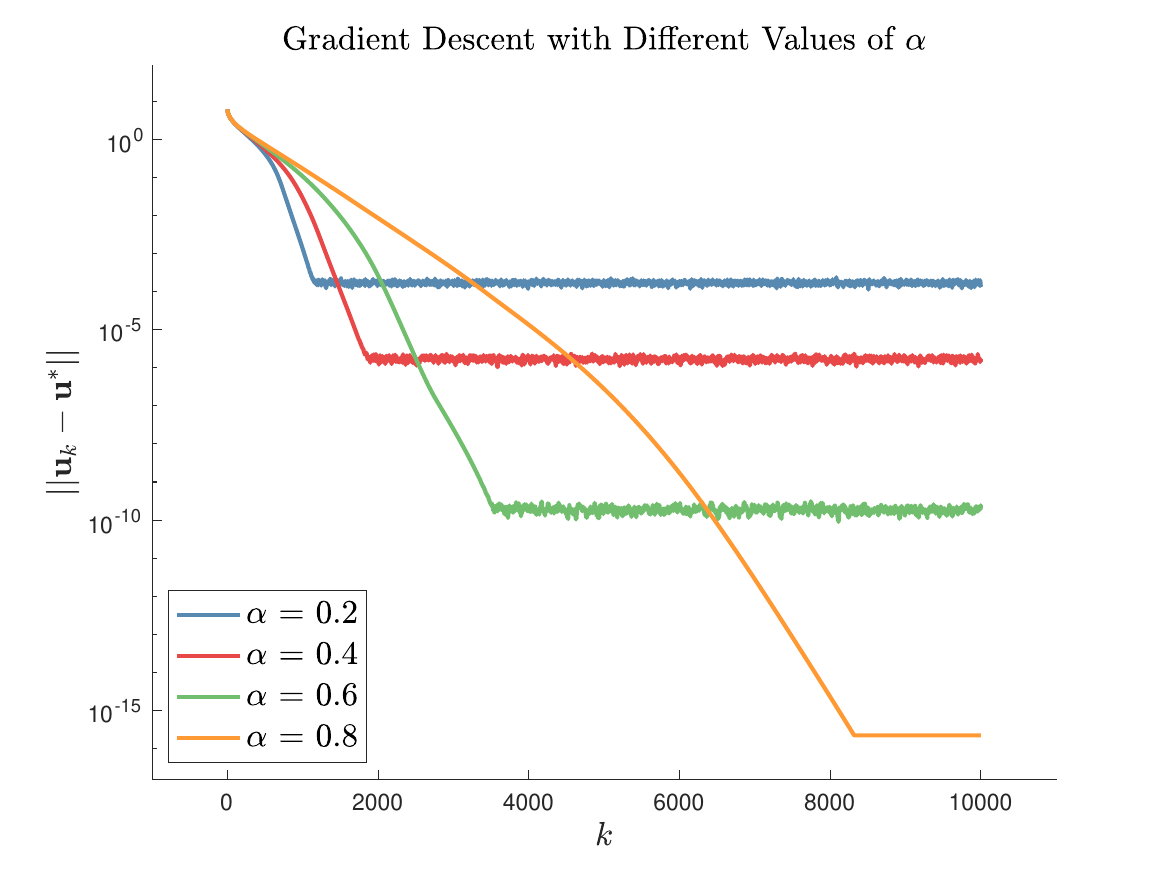}
	}
	\caption{Numerical performance of gradient descent method \eqref{eq:gd} for problem \eqref{opt:test}.}
	\label{fig:gd}
\end{figure}

\section{Conclusion}

In this paper, we have established a new complexity result for the gradient descent method with a fixed stepsize applied to strongly convex optimization problems where the objective function possesses a locally $\alpha$-H{\"o}lder continuous gradient with $\alpha \in (0, 1]$.
Our analysis interprets the gradient descent method as an explicit Euler discretization of a gradient flow, and provides rigorous estimates on the discretization error.
Based on these results, we have derived both a basic complexity result without any additional assumptions and a refined version under a mild condition on the initial point.
The main conclusion is that, to achieve an approximate minimizer with a distance to the minimizer less than $\varepsilon$, the total number of iterations required by the gradient descent method \eqref{eq:gd} is $O (\log (\varepsilon^{-1}) \varepsilon^{2 \alpha - 2})$ at the most.
This recovers the classical complexity result when $\alpha = 1$ and reveals the additional difficulty imposed by the weaker smoothness of the objective function for $\alpha < 1$.
Numerical experiments are conducted to validate our theoretical findings, demonstrating the expected behavior of gradient descent under different stepsizes and H{\"o}lder exponents.
These results offer new insights into the performance guarantees of the classic gradient descent method for a broader class of optimization problems with non-Lipschitz gradients.


%
%

\section*{Statements and Declarations}

\paragraph{Competing Interests}
The authors have no competing interests to declare that are relevant to the content of this paper.

\paragraph{Data Availability}
The authors declare that all data supporting the findings of this study are available within this paper.


\bibliographystyle{spmpsci}
\bibliography{library_HGD}

\end{document}